\documentclass[11pt]{article}
\usepackage{amsfonts, amsmath, amssymb, amscd, amsthm, color, graphicx, mathrsfs, mathabx, wasysym, setspace, mdwlist, calc,float}
\usepackage{setspace}
\usepackage{hyperref}
\usepackage{tikz-cd} 
\usepackage{hyperref}
\usepackage{tocloft}
\usepackage{xcolor}

\usepackage{hyperref}
\hypersetup{linktocpage}

\hypersetup{colorlinks,
    linkcolor={red!50!black},
    citecolor={blue!80!black},
    urlcolor={blue!80!black}}
\usepackage{float}

\usepackage{comment}

\newcommand{\NN}{\mathbb{N}}
\newcommand{\ZZ}{\mathbb{Z}}

\newcommand{\NNo}{\mathbb{N}\cup\{0\}}

\newcommand{\inj}{\hookrightarrow}

\newcommand{\act}{\curvearrowright}

\newcommand{\G}{\mathcal{G}}
\newcommand{\R}{\mathfrak{R}}
\renewcommand{\H}{\mathcal{H}}

\newtheorem{thm}{Theorem}[section]
\newtheorem{cor}[thm]{Corollary}

\newtheorem{prop}[thm]{Proposition}

\theoremstyle{definition}
\newtheorem{defn}[thm]{Definition}

\theoremstyle{remark}
\newtheorem{rem}[thm]{Remark}

\newtheorem{que}[thm]{Question}

\newcommand{\la}{\langle}
\newcommand{\ra}{\rangle}

\newcommand{\stab}{\mathbf{Stab}}

\begin{document}

\title{Borel complexity for product of trees and commuting partial maps}

\author{Koichi Oyakawa}
\date{}

\maketitle

\vspace{-10mm}

\begin{abstract}
    We prove that every acylindrical action on uniformly locally finite product of trees induces the hyperfinite orbit equivalence relation on the Roller boundary. As a byproduct, we construct an example of a standard Borel space and two commuting bounded-to-1 surjective partial Borel maps that generate a universal countable Borel equivalence relation. This contrasts to Shinko-Weilacher-Yu's theorem on hyperfiniteness of bounded-to-one actions of commutative monoids.
\end{abstract}

\section{Introduction}
Borel complexity measures how complicated it is to describe equivalence relations on standard Borel spaces. It provides a formulation of classification problems, and the study of Borel complexity has long been an active topic in descriptive set theory. Recently, orbit equivalence relations appearing naturally in geometric group theory started to be explored partially motivated by a fundamental long standing open problem asking whether every measure-hyperfinite countable Borel equivalence relation is hyperfinite (see \cite[Section 16.4]{Kec25}). Because various groups acting on nonpositively curved spaces are known to induce topologically amenable actions on suitable boundaries and hence measure-hyperfinite orbit equivalence relations (see \cite{Ada94,Kai04, Oza06,Kid08,Ham09,Lec10,GN11,NS13,HH21,BGC22}), it is natural to ask whether these equivalence relations are hyperfinite. 

This question has been studied for group actions on hyperbolic spaces and their induced actions on the Gromov boundaries (see \cite{Mar19,HSS20,MS20,PS21,Kar22,Oya24,KEOSS24,NV25,KOO26}). \cite{Oya26} provided the first result for non-hyperbolic nonpositively curved spaces, where it was proved that every group acting virtually specially on a CAT(0) cube complex with finitely many hyperplane orbits induces the hyperfinite orbit equivalence relation on the Roller boundary. This result opened up the possibility to investigate Borel complexity of boundary actions for CAT(0) cube complexes. In fact, Guentner-Niblo showed in \cite{GN11} that if a countable group acts on a finite dimensional CAT(0) cube complex, then the induced action on the Roller compactification is topologically amenable if and only if every vertex stabilizer is amenable (see \cite[Section 4]{GN11} and \cite{BCGNW09,NS13}).

In this paper, we study Borel complexity of boundary actions for product of trees, which is another important class of CAT(0) cube complexes \cite{Oya26} could not cover. Theorem \ref{thm:product of trees} is also a generalization of \cite[Theorem A]{KEOSS24} in the uniformly locally finite case. See Definition \ref{def:group action} for $\stab_G(\gamma), \stab_G(p)$.

\begin{thm}\label{thm:product of trees}
    Let $G$ be a countable group and $T_1,\cdots,T_n$ be uniformly locally finite trees with $n \in \NN$. Let $G$ act on $X=T_1\times \cdots\times T_n$ cubically. If every geodesic ray $\gamma$ in $X$ has an initial segment $p$ such that $\stab_G(\gamma) = \stab_G(p)$, then the orbit equivalence relation $E_G^{\partial_\R X}$ induced by the action of $G$ on the Roller boundary $\partial_\R X$ of $X$ is hyperfinite.
\end{thm}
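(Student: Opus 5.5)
We first reduce to the case where $G$ preserves each factor. A cubical automorphism of $X=T_1\times\cdots\times T_n$ permutes the factors: the hyperplanes of $X$ split into $n$ classes according to which factor they come from, and any cubical automorphism permutes these classes. So the subgroup $G_0\le G$ that fixes each factor has finite index, and $G_0$ acts on $X$ diagonally, $g(x_1,\dots,x_n)=(g_1x_1,\dots,g_nx_n)$. Hyperfiniteness passes from finite index subgroups to the whole group, since $E_{G_0}\subset E_G$ has finite index (Jackson--Kechris--Louveau). It is therefore enough to treat $G_0$, which we still call $G$.

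The Roller boundary of a product splits as a product: $\overline{X}^{\R}=\prod_i \overline{T_i}^{\R}$. For a tree, the Roller compactification is the vertex set together with the ends, $\overline{T_i}^{\R}=T_i^{(0)}\cup\partial T_i$. Hence $\partial_\R X$ is the finite union, over nonempty subsets $I\subseteq\{1,\dots,n\}$, of the $G$-invariant Borel pieces
\[
\textstyle \partial_I X=\prod_{i\in I}\partial T_i\times\prod_{j\notin I}T_j^{(0)}.
\]
A finite union of invariant Borel pieces, each carrying a hyperfinite relation, is hyperfinite. So we fix $I$ and work on $\partial_I X$. A point $\xi\in\partial_I X$ is represented by a geodesic ray $\gamma_\xi$ in $X$. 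Its coordinates in the factors $j\notin I$ stay at fixed vertices, and in the factors $i\in I$ it runs along rays toward the given ends. To make this canonical we choose a basepoint and take the "diagonal" combinatorial ray, so that $\xi\mapsto\gamma_\xi$ is Borel.

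The core step adapts the argument of \cite[Theorem A]{KEOSS24} for trees (the Dougherty--Jackson--Kechris tail argument) to products.

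1. For each $\xi$, build a countable Borel family of tails: the sets of sub-rays of $\gamma_\xi$ modulo the finite ambiguity in synchronizing the coordinates.
2. Show that the relation "$\xi,\eta$ have eventually equal tails up to $G$-translation" is a hyperfinite tail equivalence, namely an increasing union of finite relations. Uniform local finiteness is what bounds the number of choices at each level.
3. The hypothesis $\stab_G(\gamma)=\stab_G(p)$ is used to pass from the tail relation to the orbit relation. It shows that the map sending $g\xi$ to the $G$-orbit of a finite initial piece of $\gamma_\xi$ has finite fibres in the appropriate sense: the set of $g$ with $g\gamma_\xi$ and $\gamma_\xi$ sharing a long tail is controlled by the stabilizer of a finite segment. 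The stabilizer of a finite segment has finite orbits on the remaining data by local finiteness. We then invoke the standard criterion: an orbit relation that is Borel reducible, with finite (or hyperfinite) fibres, to a hyperfinite tail relation is hyperfinite.

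Concretely, on each $\partial_I X$ we code $\gamma_\xi$ by the sequence of its $G$-labelled edge types and apply the hyperfiniteness of the tail shift relation.

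I expect the main obstacle to be step 2. In the product the coordinates of a ray toward $\xi$ advance at independent rates, so two rays to $G$-equivalent points need not share tails in $X$. They only do so coordinatewise, with possibly different time shifts in each factor. The first thing to try is to work with the tuple of per-factor tails $(\gamma^i_\xi)_{i\in I}$ and a joint tail relation allowing independent shifts in each coordinate. One would then show this joint relation is still hyperfinite, for instance by realizing it as generated by $|I|$ commuting shift maps. This is precisely the situation where commuting maps can fail, as the abstract warns, so the argument must genuinely use the group action. My plan for doing so is to use the $\stab$ hypothesis to synchronize: fix, Borel-ly, for each orbit-relevant pair a finite segment $p$ whose stabilizer equals $\stab_G(\gamma)$, and use it to cut the shifts down to a single diagonal shift up to bounded error. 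This yields an increasing union of finite subequivalence relations.
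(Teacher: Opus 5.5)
Your reduction to the factor-preserving finite-index subgroup and the splitting into the invariant pieces $\partial_I X$ agree with the paper. The heart of the argument is missing, however, and the fix you propose for the obstacle you correctly identify cannot work.

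When $\eta=g\xi$, the rays $g\gamma_\xi$ and $\gamma_\eta$ eventually agree in each factor $i\in I$ only after a time shift $d_i$ (a Busemann difference). The vector $(d_i)_{i\in I}$ modulo the diagonal can be arbitrarily large along a single orbit. For example, take $F_2\times F_2$ acting freely on a product of two Cayley trees, with $g=(a^k,1)$ moving only the first factor: the offsets are $(k,0)$. This discrepancy has nothing to do with stabilizers, so the hypothesis $\stab_G(\gamma)=\stab_G(p)$ cannot synchronize the coordinates. Moreover, ``equal tails up to a diagonal shift with error at most $N$'' is not transitive for fixed $N$, so you do not get an increasing union of finite equivalence relations this way. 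Independent commuting shifts are unavoidable.

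Your passing suggestion to realize the joint relation by $|I|$ commuting shift maps is the right one. The missing ingredient is \cite[Theorem 1.5]{SWY26}: countably many commuting bounded-to-one Borel maps on a standard Borel space generate a hyperfinite relation. Your fear that this is ``where commuting maps can fail'' conflates total and partial maps. The counterexample mentioned in the abstract uses partial maps, whereas the relevant shifts here are everywhere defined. Separately, coding rays by sequences of $G$-orbits of edges gives only a homomorphism to a tail relation, not a reduction. Without a special colouring, equal label tails do not force $G$-equivalence, so hyperfiniteness does not pull back.

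What the stabilizer hypothesis actually buys is smoothness. The paper's construction runs as follows.
\begin{enumerate}
\item Let $Z$ be the set of isometric cubical maps $f\colon\NN^I\to X$ whose $i$-th coordinate axis is dual only to hyperplanes of $T_i$ (orthants). $H$ acts by post-composition, and $\tau_i(f)=f\circ\tau_i'$ shifts the $i$-th coordinate. The $\tau_i$ are total, commute with each other and with $H$, and are bounded-to-one by uniform local finiteness.
\item Write $x_{f,k}=f(k,\dots,k)$. The hypothesis gives, for each $f$, some $R$ with $\stab_H(x_{f,1})\cap\stab_H(x_{f,R})=\stab_H(f)$.
\item Hence $Z$ is a countable union of sets $Z_\psi$, indexed by finite patterns $\psi=f|_{[R]^I}$, on each of which $E_H^Z$ is equality. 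So $E_H^Z$ is smooth, $Z/H$ is standard Borel, and the $\tau_i$ descend to commuting bounded-to-one total Borel maps on $Z/H$.
\item \cite[Theorem 1.5]{SWY26} makes the relation these maps generate hyperfinite. This lifts to the relation on $Z$ generated by $H$ and the $\tau_i$.
\item The map $f\mapsto\lim_k x_{f,k}$ is an $H$-equivariant surjection onto $\partial_I X$. Its fibres are exactly the classes of the relation generated by the $\tau_i$ alone, so it transfers hyperfiniteness to $E_H^{\partial_I X}$.
\end{enumerate}
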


Importantly, Theorem \ref{thm:product of trees} applies to geometric actions on product of trees, which contains Burger-Mozes simple groups \cite{BM97,BM00}, Wise's CSC groups \cite{Wis07}, and other non residually finite CAT(0) cubical groups (see \cite{Rat07,JW09,BK22,LLM23,AL26}). The reason \cite{Oya26} couldn't cover actions of these groups is because they inherently lack nice coloring of hyperplanes that virtually special actions on CAT(0) cube complexes have. We overcome this difficulty by using product structure and applying a recent result \cite[Theorem 1.5]{SWY26} by Shinko-Weilacher-Yu, where they proved that every equivalence relation generated by countably many commuting bounded-to-one Borel maps is hyperfinite.

The Shinko-Weilacher-Yu's theorem solves the bounded-to-one case of a well-known folklore open problem asking whether every Borel action of a countable commutative monoid generates a hypersmooth equivalence relation (see \cite[Question 1.4]{SWY26}), which is motivated by Gao-Jackson's theorem solving Weiss's conjecture for abelian groups (see \cite{GJ15}). It turns out that this problem has a negative answer in a strong sense when we attempt extension to partial maps, which we prove in Theorem \ref{thm:commuting Einfty} below.

\begin{thm}\label{thm:commuting Einfty}
    There exists a standard Borel space $X$, a Borel subset $B \subset X$, and bounded-to-1 surjective Borel maps $f_1,f_2 \colon B \to X$ such that $f_1 f_2 =f_2f_1$ on $B \cap f_1^{-1}(B) \cap f_2^{-1}(B)$ and the equivalence relation $E_t^X(f_1,f_2)$ on $X$ generated by $f_1$ and $f_2$ is a universal countable Borel equivalence relation.
\end{thm}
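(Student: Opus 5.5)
The plan is to make the commutation hypothesis \emph{vacuous} and put all the complexity into a labelled-graph encoding of a free group action. Concretely, I will arrange that for every $x\in B$ exactly one of $f_1(x),f_2(x)$ lies in $B$. Then $B\cap f_1^{-1}(B)\cap f_2^{-1}(B)=\emptyset$, so $f_1f_2=f_2f_1$ holds there trivially. The remaining task is to choose the maps so that the generated relation is universal.

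\emph{Construction.} Let $F_2=\la a,b\ra$ act freely and Borel on a standard Borel space $W$ with universal orbit equivalence relation. For instance, take the free part of the shift on $2^{F_2}$; more precisely, use a free action whose restriction to every finite index subgroup, or to any fixed free nonabelian subgroup, is still universal (a standard fact for the free part of $(2^{\NN})^{F_2}$). Put
\[
X=W\times \NN\times\{0,1\}, \qquad B=W\times\NN\times\{0\}.
\]
Choose bijections $\sigma,\tau\colon \NN\to \NN\times\{0,1\}$ such that $\sigma(n)\in\NN\times\{0\}$ iff $n$ is even, and $\tau(n)\in\NN\times\{0\}$ iff $n$ is odd. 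Define
\[
f_1(w,n,0)=(aw,\sigma(n)),\qquad f_2(w,n,0)=(bw,\tau(n)).
\]
These are Borel bijections $B\to X$, so they are bounded-to-1 (in fact injective) and surjective. By the parity choice, the commutation condition is vacuous.

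\emph{Computing $E_t^X(f_1,f_2)$.} Consider the graph $\Gamma$ on $V=\NN\times\{0,1\}$ with an $a$-labelled edge $(n,0)\to\sigma(n)$ and a $b$-labelled edge $(n,0)\to\tau(n)$. Then $(w,x)\,E_t\,(w',y)$ if and only if there is a path in $\Gamma$ from $x$ to $y$, traversing edges in either direction, whose label $g\in F_2$ satisfies $w'=gw$. I will choose $\sigma,\tau$ so that $\Gamma$ is connected; this is easy by a back-and-forth enumeration respecting the parity constraints.

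Fix the base vertex $x_0=(0,0)$ and let $H\le F_2$ be the group of labels of closed paths at $x_0$. For each vertex $x$, fix a path from $x$ to $x_0$ with label $g_x$. Then the set of labels of paths $x\to y$ is $g_y^{-1}Hg_x$. It follows that $(w,x)\mapsto (g_xw,x_0)$ is Borel, and that $(w,x)\,E_t\,(w',y)$ iff $g_xw$ and $g_yw'$ lie in the same $H$-orbit. Hence $E_t^X(f_1,f_2)$ is Borel bireducible with the orbit relation $E_H$ of $H$ on $W$. It therefore suffices to build $\sigma,\tau$ so that $H$ is a free nonabelian subgroup on which the action on $W$ remains universal. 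The cleanest choice is to make $H=F_2$, for example by forcing closed loops at $x_0$ with labels $a$ and $b$ (up to conjugation by already-built path labels), inserted during the back-and-forth.

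\emph{Main obstacle.} The delicate step is the simultaneous bookkeeping of the back-and-forth construction. It must keep $\sigma$ and $\tau$ bijections with the prescribed parity behaviour, keep $\Gamma$ connected, and realise loops whose labels generate a subgroup $H$ with $E_H$ universal. I would first try to realise a loop of label $a$ and a loop of label $b$ at $x_0$ directly, giving $H=F_2$. If the parity constraints make that awkward, I would accept whatever free nonabelian $H$ arises (for example one generated by $aba^{-1}b^{-1}$-type words) and use universality of the restricted action of such a subgroup on the chosen $W$.
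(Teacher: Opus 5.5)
Your reduction of $E_t^X(f_1,f_2)$ to the restricted orbit relation $E_H^W$ is correct. Emptying $B\cap f_1^{-1}(B)\cap f_2^{-1}(B)$ is also a legitimate route for the statement as written, though it differs from the paper's. The paper takes $X=(P\times Y)/G$, with $P$ the finite paths in a Cayley graph of a finitely generated $G$ and $E_G^Y$ universal (with no freeness assumed on $Y$), and lets $f_1,f_2$ delete the first and last vertex, so they genuinely commute on paths of length $\ge 2$.

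\textbf{The gap is the choice of $W$.} No free Borel action of $F_2$ has universal orbit relation. For a free action, the Borel graph $w\sim aw$, $w\sim bw$ is a treeing, whereas $E_\infty$ is not treeable: treeability passes down along Borel reductions, and non-treeable relations exist (see \cite{JKL02}). The same objection kills your fallback. Any $H\le F_2$ is free by Nielsen--Schreier and still acts freely on $W$, so $E_H^W$ is treeable. Hence with $W$ the free part of $(2^{\NN})^{F_2}$, your relation $E_t^X(f_1,f_2)\sim_B E_H^W$ is never universal, and the ``standard fact'' you invoke is false; it holds for the full shift, not the free part. Your preferred choice $H=F_2$ is also unattainable. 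Every vertex of $\Gamma$ has at most one incoming and at most one outgoing edge of each label, so reduced paths carry reduced labels. Thus $a,b\in H$ would force an $a$-loop and a $b$-loop at $x_0=(n,0)$, i.e.\ $\sigma(n),\tau(n)\in\NN\times\{0\}$, which your parity rule forbids.

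The repair is short, because freeness is never used in your reduction. Prescribe $\sigma(0)=(0,0)$, $\tau(0)=\sigma(1)=(0,1)$, $\tau(1)=(1,0)$, and extend $\sigma,\tau$ arbitrarily to bijections obeying the parity rule. No back-and-forth or connectivity is needed, since $w\mapsto(w,x_0)$ already reduces $E_H^W$ to $E_t^X(f_1,f_2)$, where $H$ is the loop-label group of the component of $x_0$. Applying $f_1$ to $(w,x_0)$ gives $(aw,x_0)$. Applying $f_2,f_1^{-1},f_2,f_1,f_2^{-1}$ in turn gives $(b^{-1}aba^{-1}bw,x_0)$. So $H$ contains a nonabelian free subgroup $L$.

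Now take $W=(2^{\NN})^{F_2}$ with the full, non-free shift. Choosing a right transversal $T$ of $H$ in $F_2$, the $H$-action on $W$ is the shift on $((2^{\NN})^T)^H\cong(2^{\NN})^H$. The relation $E(H,2^{\NN})$ is universal: $E_\infty$ is the orbit relation of an $L$-action \cite{DJK94}, which induces up to an $H$-action with bireducible orbit relation, and that action embeds $H$-equivariantly into $(2^{\NN})^H$. With these changes your argument goes through.
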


Theorem \ref{thm:commuting Einfty} contrasts to both \cite[Theorem 1.5]{SWY26} on commutative monoid actions and Remark \ref{rem:single case} on the case of single functions.
 
This paper is organized as follows. In Section \ref{sec:Preliminaries}, we prepare necessary definitions and notations. In Section \ref{sec:Boundary actions for product of trees}, we prove Theorem \ref{thm:product of trees}. In Section \ref{sec:Commuting partial maps}, we prove Theorem \ref{thm:commuting Einfty}. In both Section \ref{sec:Boundary actions for product of trees} and Section \ref{sec:Commuting partial maps}, we present intermediate results to well explain how the study of boundary actions of product of trees inspired the construction in Theorem \ref{thm:commuting Einfty}.

\noindent\textbf{Acknowledgment.}
I would like to thank Clinton Conley, Jingyin Huang, Andrew Marks, Forte Shinko, and Felix Weilacher for helpful discussions. No AI was used for proving any results in this paper nor for writing this paper.

\section{Preliminaries}\label{sec:Preliminaries}
For $n \in \NN$, we define $[n]$ by $[n]=\{1,\cdots,n\}$.

\begin{defn}
    Let $X$ be a graph.  We denote by $X^{(0)}$ and $X^{(1)}$ the set of vertices and edges of $X$ respectively. The graph $X$ is called \textbf{uniformly locally finite} if there exists $M\in\NN$ such that the valency of every vertex is at most $M$. Let $p$ be a path in $X$, where we denote $p=(p(0),\cdots,p(n))$ with $n\in \NNo$. Note that $p(i)$ and $p(i+1)$ are adjacent vertices in $X$. Define $p_-, p_+, |p|$ by $p_-=p(0)$, $ p_+=p(n)$, and $|p|=n$. When $p$ has no self-intersection, for $a=p(i)$ and $b=p(j)$ with $0\le i\le j\le n$, we denote by $p_{[a,b]}$ the subpath of $p$ from $a$ to $b$ i.e. $p_{[a,b]}=(p(i),\cdots,p(j))$. A \textbf{ray} in $X$ is a graph homomorphism from $[0,\infty)$ to $X$. The notations $\gamma_-$ and $\gamma_{[a,b]}$ are defined similarly for rays. A path $p$ in $X$ from $x\in X^{(0)}$ to $y\in X^{(0)}$ is called \textbf{geodesic} if $|p|$ is the minimum among all paths from $x$ to $y$. A subpath $\gamma_{[\gamma_-,v]}$ of a geodesic ray $\gamma$ in $X$ with some vertex $v \in \gamma^{(0)}$ is called an \textbf{initial segment} of $\gamma$. A \textbf{tree} is a connected graph without any cycle.
\end{defn}

\begin{defn}\label{def:group action}
    Let a group $G$ act on a set $X$. For $a \in X$ and $A \subset X$, we define $\stab_G(a)$ and $\stab_G(A)$ by $\stab_G(a)=\{g \in G \mid ga=a\}$ and $\stab_G(A)=\bigcap_{a \in A}\stab_G(a)$. The action $G \act X$ is called \textbf{free} if $\stab_G(x)=\{1\}$ for any $x \in X$. We denote by $E_G^X$ the orbit equivalence relation of the action $G \act X$ i.e. for $x,y \in X$, $x\, E_G^X \,y \iff \exists\,g\in G,\, gx=y$. 
\end{defn}

\begin{defn}\label{def:concepts on CAT(0) cc}
    Let $X$ be a CAT(0) cube complex. For a subcomplex $C$ of $X$, we denote by $\H(C)$ the set of all hyperplanes of $X$ dual to some 1-cube of $C$. For $A,B \subset X^{(0)}$, we define $\H_X(A,B)$ to be the set of all hyperplanes that separate $A$ and $B$. (We denote $\H_X(a,B)$ when $A=\{a\}$ for brevity.) When $e$ is a 1-cube of $X$, we use $\H(e)$ to mean an element of $\H(X)$ as well as a singleton by abuse of notation. We denote by $\partial_\R X$ and $\G_X$ the Roller boundary of $X$ and the median graph on $\partial_\R X$ (see \cite[Section 2.1]{Oya25}).
\end{defn}

\begin{defn}\label{def:Borel eq rel}
    Let $X$ be a standard Borel space. A Borel equivalence relation $E$ on $X$ is called \textbf{hyperfinite} if there exist finite Borel equivalence relations $\{E_n\}_{n \in \NN}$ on $X$ such that $E = \bigcup_{n \in \NN}E_n$ and $\forall\,n \in \NN,\,E_n \subset E_{n+1}$. A countable Borel equivalence relation $E$ on $X$ is called \textbf{universal} if every countable Borel equivalence relation $F$ on a standard Borel space $Y$ Borel reduces to $E$ i.e. $F \le_B E$. For a set $(f_i)_{i \in I}$ of partial functions, where $f_i \colon B_i \to X$ with $B_i \subset X$ for each $i \in I$, we denote by $E_t^X((f_i)_{i \in I})$ the equivalence relation on $X$ generated by $(f_i)_{i \in I}$, that is, $E_t^X((f_i)_{i \in I})$ is the transitive closure of the relation $\sim$ on $X$ defined by $x \sim y$ if there exists $i \in I$ such that either $x\in B_i$ and $f_i(x)=y$, or $y\in B_i$ and $f_i(y)=x$ holds. A partial map $f \colon B \to X$ with $B \subset X$ is called \textbf{bounded-to-1} if there exists $M \in \NN$ such that $\sup_{x \in X}|f^{-1}(x)| \le M$.
\end{defn}

\section{Boundary actions for product of trees}\label{sec:Boundary actions for product of trees}

First, we prove Theorem \ref{thm:product of trees}. Note that \cite[Theorem A]{KEOSS24} is not applicable to each tree consisting of the product even in the case of diagonal actions. This is because even if a diagonal action satisfies the condition in Theorem \ref{thm:product of trees}, the induced action on a factor tree need not satisfy the condition of \cite[Theorem A]{KEOSS24}. The action in Proposition \ref{prop:Roller mod G is not hyperfin} is such an example.

Recall that $\stab_G(\gamma)$ and $\stab_G(p)$ below are the pointwise stabilizer of $\gamma$ and $p$ in $G$ respectively (see Definition \ref{def:group action}).

\begin{proof}[\textbf{Proof of Theorem \ref{thm:product of trees}}]
    We can naturally identify $\H(X)$ and $\bigsqcup_{i=1}^n T_i^{(1)}$. There exists a finite index subgroup $H$ of $G$ such that $H\cdot T_i^{(1)} = T_i^{(1)}$ for any $i \in [n]$. Indeed, $e \in T_i^{(1)}$ and $f \in T_j^{(1)}$ cross if and only if $i \neq j$, hence $G \act \H(X)$ induces the action $G\act [n]$ and we can take as $H$ the kernel of the action $G \act [n]$. By \cite[Proposition 1.3.(vii)]{JKL02}, it's enough to show that $E_H^{\partial_\R X}$ is hyperfinite. Fix $o \in X^{(0)}$. For $I \subset [n]$, define $A_I$ by
    \begin{align*}
        A_I &= \{x \in \partial_\R X \mid i \in I \iff |\H_X(o,x)\cap T_i^{(1)}|=\infty\}.
    \end{align*}
    Note $\partial_\R X = \bigsqcup_{I\subset [n]} A_I$ and each $A_I$ is $H$-invariant. To prove hyperfinitness of $E_H^{\partial_\R X}$, it's enough to prove that $E_H^{A_I}$ is hyperfinite for every $I\subset [n]$, which we will do in the following.

    Let $I\subset [n]$. Define $Z$ to be the set of isometric cubical maps $f \colon \NN^I \to X$ such that
    \begin{align}\label{eq:define Z}
        \H(f(\alpha_i)) \subset T_i^{(1)} 
        {\rm~for ~every~} i \in I,
        {\rm ~ where}~ \alpha_i=\{m\colon I\to\NN\mid \forall\,j \neq i,\, m(j)=1 \}.
    \end{align}
    Here, we consider $\NN^I$ as a CAT(0) cube complex with integer points being the set of 0-cubes. For $f \in Z$ and $k \in \NN$, set $x_{f,k} = f((k)_{i \in I}) \in X^{(0)}$. The group $H$ acts on $Z$ by $(g\cdot f) (x)= gf(x)$ for $g \in H$, $f \in Z$, and $x\in \NN^I$. 
    
    For each $i \in I$, define $\tau'_i \colon \NN^I \to \NN^I$ and $\tau_i \colon Z \to Z$ by 
    \begin{align*}
    (\tau_i'm)(j)=
    \begin{cases}
        m(j)+1 &{\rm if~}j=i\\
        m(j)   &{\rm if~}j\neq i~
    \end{cases}
    {\rm where}~m\colon I \to \NN,{\rm ~~~~~and~~~~~}
    \tau_i (f) = f \circ \tau_i'.   
    \end{align*}
    Note $\tau_i \circ \tau_j = \tau_j \circ \tau_i$ for any $i,j \in I$. We have $\tau_i \circ g = g \circ \tau_i$ for any $i \in I$ and $g \in G$. Since $X$ is uniformly locally finite and $X$ is nonpositively curved, there exists $K\in \NN$ such that $|\tau_i^{-1}(f)| \le K$ for any $f \in Z$ and $i \in I$.

    We show that $E_H^Z$ is smooth. For $R \in \NN$, define $\Delta_R$ to be the set of isometric cubical maps $f\colon [R]^I \to X$ such that $\H(f([R]^I \cap \alpha_i)) \subset T_i^{(1)}$ for every $i \in I$, where $\alpha_i$ is as in \eqref{eq:define Z}. Define $\Delta$ by $\Delta=\bigsqcup_{R \in \NN} \Delta_R$. For each $\psi \in \Delta_R$, define $Z_\psi$ to be set of all $f \in Z$ such that 
    \begin{align*}
        \psi(x)=f(x) ~{\rm for ~every}~ x \in [R]^I {\rm ~and~} \stab_H(x_{f,1}) \cap \stab_H(x_{f,R}) = \stab_H(f).
    \end{align*} 
    We claim $Z= \bigcup_{\psi \in \Delta} Z_\psi$. Indeed, let $f \in Z$. Fix a geodesic ray $\gamma$ in $X$ from $x_{f,1}$ that passes through $\{x_{f,k}\}_{k \in \NN}$. By our condition on $G \act X$, there exists an initial segment $p$ of $\gamma$ such that $\stab_G(\gamma) = \stab_G(p)$. Take $k \in \NN$ such that $p \subset \gamma_{[x_{f,1}, x_{f,k}]}$, then we have $\stab_H(\gamma) \subset \stab_H(\gamma_{[x_{f,1}, x_{f,k}]}) \subset \stab_H(p) = \stab_H(\gamma)$. By $\forall\,i \in [n],\,H\cdot T_i^{(1)} = T_i^{(1)}$, we have $\stab_H(x_{f,1}) \cap \stab_H(x_{f,k}) = \stab_H(\gamma_{[x_{f,1}, x_{f,k}]})$ and $\stab_H(f)=\stab_H(\gamma)$. Hence, $\stab_H(x_{f,1}) \cap \stab_H(x_{f,k}) = \stab_H(f)$. Define $\psi \in \Delta_k$ by $\psi=f|_{[k]^I}$, then we have $\psi \in Z_\psi$. Thus, the claim follows.
    
    For any $\psi \in \Delta$ and $f_1,f_2 \in Z_\psi$, we have $f_1\, E_H^Z|_{Z_\psi}\, f_2 \iff f_1=f_2$. Hence, $E_H^Z|_{Z_\psi}$ is smooth. Since $\Delta$ is countable, $E_H^Z$ is smooth by \cite[Lemma 3.1]{KEOSS24}.

    Since $E_H^Z$ is smooth, $Z/H$ is a standard Borel space. By $\forall\, g \in G,\, \forall\, i \in I,\, \tau_i \circ g = g \circ \tau_i$, the map $\widetilde \tau_i \colon Z/H \to Z/H$ is well-defined by $\widetilde \tau_i([f])=[\tau_i(f)]$ for each $i \in I$. By \cite[Theorem 1.5]{SWY26}, $E_t^{Z/H}((\widetilde \tau_i)_{i \in I})$ is hyperfinite. By \cite[Lemma 2.1]{KEOSS24}, we can see that the equivalence relation $F'$ on $Z$ generated by $\{\tau_i \mid  i \in I\}$ and $H$ is hyperfinite.

    Let $F_0'$ be the equivalence relation on $Z$ generated by $\{\tau_i \mid  i \in I\}$. Define the map $\varphi\colon Z \to A_I$ by $\varphi(f)=\lim_{k\ \to \infty}x_{f,k} \in A_I$. We can see $\varphi$ is surjective and for any $p,q \in Z$, we have $p \,F'_0\,q \iff \varphi(p)=\varphi(q)$. Hence, $Z/F_0'$ is Borel isomorphic to $A_I$. Since $\varphi$ is $H$-equivariant, $E_H^{A_I}$ is Borel bi-reducible to $E_H^{Z/F_0'}$. Thus, $E_H^{A_I}$ is hyperfinite by hyperfiniteness of $F'$ and \cite[Lemma 2.1]{KEOSS24}.
\end{proof}

\begin{rem}
    If \cite[Question 1.4]{SWY26} is solved positively, then Theorem \ref{thm:product of trees} is true without the condition that $T_i$'s are uniformly locally finite.
\end{rem}

An immediate corollary of Theorem \ref{thm:product of trees} is Corollary \ref{cor:product of trees} below. Importantly, Corollary \ref{cor:product of trees} applies to geometric actions on product of trees, which contains various groups \cite[Theorem 1.2]{Oya26} could not cover such as Burger-Mozes simple groups, Wise’s CSC groups, and other non residually finite CAT(0) cubical groups.

An action of a group $G$ on a metric space $(X,d_X)$ is called \textbf{acylindrical} if for any $\varepsilon > 0$, there exist $R,M > 0$ such that for any $x,y \in X$ with $d_X(x,y) \ge R$, we have
\begin{align*}
    \#\{g \in G\mid d_X(x,gx) \le \varepsilon {\rm ~and~} d_X(y,gy) \le \varepsilon \} \le M.
\end{align*}

\begin{cor}\label{cor:product of trees}
    Let $G$ be a countable group and $T_1,\cdots,T_n$ be uniformly locally finite trees. If $G$ acts on $X=T_1\times \cdots\times T_n$ acylindrically, then $E_G^{\partial_\R X}$ is hyperfinite.
\end{cor}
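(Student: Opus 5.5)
The plan is to deduce Corollary \ref{cor:product of trees} directly from Theorem \ref{thm:product of trees}. Two things need checking. First, the action is cubical, or can be replaced by one that is. Second, every geodesic ray $\gamma$ in $X$ has an initial segment $p$ with $\stab_G(\gamma)=\stab_G(p)$.

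For the first point, I will read ``acts on $X$'' as an action by automorphisms of the product cube complex, as is implicit in the setting. If needed, one can note that an isometry of the $\ell^1$ metric on a product of trees permutes factors and is cubical on the vertex set. The only metric facts used are that $d_X$ on vertices is the combinatorial (hyperplane-counting) distance, and that pointwise stabilizers of vertices are what acylindricity controls.

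For the second point, I will use acylindricity with $\varepsilon=0$, or any $\varepsilon<1$ if one insists on $\varepsilon>0$. This gives $R,M$ such that whenever $d_X(x,y)\ge R$, the set $\stab_G(x)\cap\stab_G(y)$ has at most $M$ elements. Let $\gamma$ be a geodesic ray, and take $p_k=\gamma_{[\gamma(0),\gamma(k)]}$. The subgroups $\stab_G(p_k)$ form a decreasing chain whose intersection is $\stab_G(\gamma)$. For $k\ge R$ we have $\stab_G(p_k)\subset \stab_G(\gamma(0))\cap\stab_G(\gamma(k))$, and the latter is finite of size at most $M$, since $d_X(\gamma(0),\gamma(k))=k\ge R$ because $\gamma$ is geodesic. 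So from $k=R$ on, we have a decreasing chain of finite subgroups of bounded size, which must stabilize at some $k_0$. Then $\stab_G(\gamma)=\bigcap_k\stab_G(p_k)=\stab_G(p_{k_0})$, and $p=p_{k_0}$ is the required initial segment.

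With both hypotheses verified, Theorem \ref{thm:product of trees} gives hyperfiniteness of $E_G^{\partial_\R X}$. The only mild obstacle is the bookkeeping between the geometric acylindricity condition and pointwise stabilizers. In particular, the case $\varepsilon>0$ only yields the finite set $\{g : d(x,gx)\le\varepsilon,\ d(y,gy)\le\varepsilon\}$. That set contains $\stab_G(x)\cap\stab_G(y)$, so it suffices. The chain-stabilization argument itself is routine.
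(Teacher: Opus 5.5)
Your proposal is correct and is essentially the paper's argument: the paper also takes the decreasing chain of pointwise stabilizers of initial segments of $\gamma$, uses acylindricity to make one of them finite, concludes that the chain stabilizes at some $N_1$ with $\stab_G(\gamma)=\stab_G(\gamma_{[\gamma(0),\gamma(N_1)]})$, and then applies Theorem \ref{thm:product of trees}. Your remarks on choosing $\varepsilon<1$ and on the action being cubical spell out points the paper leaves implicit.
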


\begin{proof}
    Let $\gamma=(\gamma(0),\gamma(1),\cdots)$ be a geodesic ray in $X$. For $n \in \NN$, define $H_n$ by $H_n = \bigcap_{i=1}^n \stab_G(\gamma(i))$, then we have $H_n \supset H_{n+1}$ for any $n \in \NN$. Since $G \act X$ is acylindrical, there exists $N_0 \in \NN$ such that $H_{N_0}$ is finite. Hence, there exists $N_1 \ge N_0$ such that $H_n=H_{N_1}$ for any $n \ge N_1$. Thus, $\stab_G(\gamma) = H_{N_1} = \stab_G(\gamma_{[\gamma(0),\gamma(N_1)]})$. Hence, $E_G^{\partial_\R X}$ is hyperfinite by Theorem \ref{thm:product of trees}.
\end{proof}

Because hyperfiniteness for the Roller boundary in the case of virtually special groups actually followed from hyperfiniteness for the space of connected components of the Roller boundary (see \cite[Remark 3.13]{Oya26}), it is natural to ask whether the same is true in the case of product of trees. It turns out this is not the case. We prove it in Proposition \ref{prop:Roller mod G is not hyperfin} below.

In Proposition \ref{prop:Roller mod G is not hyperfin} below, $\partial_\R X/\G_X$ is the quotient of $\partial_\R X$ by the path-connectivity equivalence relation induced by $\G_X$. As a set, $\partial_\R X/\G_X$ is the set of path-connected components of the Borel median graph $\G_X$. By \cite[Theorem 1.1]{Oya25}, $\partial_\R X/\G_X$ is a standard Borel space.

\begin{prop}\label{prop:Roller mod G is not hyperfin}
    There exists a free action of a countable group $G$ on product of uniformly locally finite trees $X=T_1 \times T_2$ such that $E_G^{\partial_\R X/\G_X}$ is not measure-hyperfinite.
\end{prop}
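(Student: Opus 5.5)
The plan is to take for $G$ an irreducible torsion-free cocompact lattice in $\mathrm{Aut}(T_1)\times\mathrm{Aut}(T_2)$, for instance a Burger–Mozes group or one of Wise's CSC groups. Such a $G$ acts freely and cocompactly on $X=T_1\times T_2$, and its projections to $\mathrm{Aut}(T_i)$ are injective with non-discrete image. The argument has three steps: compute $\partial_\R X/\G_X$, reduce to the action on one tree boundary, and show that action is not amenable using the product structure.

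\textbf{Step 1: components of the Roller boundary.} For locally finite trees, the Roller compactification of $T_i$ is $T_i^{(0)}\sqcup\partial T_i$, and the Roller compactification of $X$ is the product of these. Two points are adjacent in $\G_X$ if they differ across exactly one hyperplane. Consequently:
\begin{itemize}
\item each set $\{\xi\}\times T_2^{(0)}$ with $\xi\in\partial T_1$ is a single component, isomorphic to $T_2$;
\item similarly each $T_1^{(0)}\times\{\eta\}$ with $\eta\in\partial T_2$ is a single component;
\item each point $(\xi,\eta)\in\partial T_1\times\partial T_2$ is its own component, since changing either coordinate crosses infinitely many hyperplanes.
\end{itemize}
This gives a $G$-equivariant Borel identification
\[
\partial_\R X/\G_X\cong \partial T_1\sqcup\partial T_2\sqcup(\partial T_1\times\partial T_2).
\]
It therefore suffices to show that $E_G^{\partial T_1}$ is not measure-hyperfinite, for a suitable $G$-quasi-invariant measure $\mu$ on $\partial T_1$. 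I would take $\mu$ to be the visual (harmonic) measure, which is quasi-invariant under $\mathrm{Aut}(T_1)$.

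\textbf{Step 2: non-amenability of $G\act\partial T_1$.} Put $H_i=\overline{\mathrm{pr}_i(G)}$. For a vertex-transitive choice, $\partial T_1=H_1/P$ with $P=\stab_{H_1}(\xi)$ amenable. By Mackey/Zimmer duality, the $G$-action on $H_1/P=(H_1\times H_2)/(P\times H_2)$ is amenable if and only if $P\times H_2$ acts amenably on $(H_1\times H_2)/G$. Since $G$ is a lattice, $(H_1\times H_2)/G$ carries a finite invariant measure, so this holds if and only if $P\times H_2$ is amenable. But $H_2$ is non-amenable: it is a closed, non-compact, boundary-transitive group of tree automorphisms (in the Burger–Mozes situation $H_2$ is locally $2$-transitive). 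Hence $G\act(\partial T_1,\mu)$ is not $\mu$-amenable.

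\textbf{Step 3: from non-amenable action to non-hyperfinite relation (main obstacle).} To pass from non-amenability of the action to non-hyperfiniteness of $E_G^{\partial T_1}$, I would use that a countable group action with amenable stabilizers is $\mu$-amenable if and only if its orbit relation is $\mu$-hyperfinite. So I must show that $\mu$-a.e.\ stabilizer $G_\xi$ is amenable. This is where I expect the difficulty.

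The approach I would try first:
\begin{itemize}
\item Using injectivity of $\mathrm{pr}_1$, write $G_\xi$ as an extension of a subgroup of $\ZZ$ (the Busemann character at $\xi$) by the subgroup $N_\xi$ fixing a horoball pointwise.
\item $N_\xi$ is an increasing union of the groups $G\cap(\stab_{H_1}(\text{ray segment})\times H_2)$. Freeness of $G$ on $X$ makes these act freely on $T_2$.
\item Cocompactness makes $G$-orbits of pairs (vertex of $T_1$, direction) finite. For $\mu$-generic $\xi$ (in particular, not a fixed point of a hyperbolic element), these groups should then stabilize to a finite, hence trivial, group.
\end{itemize}

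If that fails, the fallback is to choose $G$ more carefully among Wise's or Rattaggi's explicit lattices, so that $G_\xi$ is visibly cyclic or trivial for $\mu$-a.e.\ $\xi$; $\mu$-generic points avoid the countably many fixed points of hyperbolic elements. In either case, Step 2 combined with amenable stabilizers shows that $E_G^{\partial T_1}$, and hence $E_G^{\partial_\R X/\G_X}$, is not measure-hyperfinite.
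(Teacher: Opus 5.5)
\textbf{The gap is Step 3, and it carries the whole difficulty.} Steps 1 and 2 are sound. Step 1 is the same identification the paper uses, $(\Gamma\times\partial_\R T)/\G_X\cong\partial_\R T$. Step 2 is a correct use of Zimmer duality once $H_1$ is transitive on $\partial T_1$. But non-amenability of $G\act\partial T_1$ says nothing about $E_G^{\partial T_1}$ without control of stabilizers, and your argument for that control fails.

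For a vertex $v\in T_1$, $G_v$ acts freely with finitely many orbits on $\{v\}\times T_2$, so it is a non-abelian free group. The stabilizers $G_{[v,v_n]}$ of the finite segments of the ray $[v,\xi)$ are finite-index subgroups of $G_v$. A decreasing chain of such groups cannot ``stabilize to a finite group'': if it stabilizes, the ray stabilizer is infinite and non-amenable. What you actually need is that the infinite intersection $G_{[v,\xi)}=\bigcap_n G_{[v,v_n]}$ is trivial (or at least cyclic) for $\mu$-a.e.\ $\xi$. The trivial version is exactly essential freeness of the measure-preserving action $G_v\act(\partial T_1,\mu_v)$, where $\mu_v$ is the visual measure based at $v$. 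Your bullets do not reach this:
\begin{itemize}
\item Finiteness of orbits of (vertex, direction) pairs gives no information about this intersection.
\item Excluding fixed points of hyperbolic elements is irrelevant, since ray stabilizers consist of elliptic elements.
\item Since $\mathrm{pr}_1(G)$ is non-discrete, $G_v$ contains nontrivial elements fixing arbitrarily large balls about $v$. Nothing in your argument prevents such elements from fixing a $\mu_v$-positive set of ends, and this is exactly what has to be ruled out for non-linear lattices like the Burger--Mozes groups.
\end{itemize}
The fallback of ``choosing $G$ more carefully'' is not an argument.

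\textbf{Comparison with the paper.} If you had essential freeness of $G_v\act(\partial T_1,\mu_v)$, Step 2 would be unnecessary. A free p.m.p.\ action of a non-abelian free group has a non-$\mu_v$-hyperfinite orbit relation, and this subrelation sits inside $E_G^{\partial T_1}$. That is the mechanism of the paper's proof, which builds freeness in by construction and avoids lattices entirely. There $F_2$ acts diagonally on $\Gamma\times T$, where $\Gamma$ is its Cayley tree (so the action on $X$ is free) and $T$ is the coset tree of the congruence chain $K_n$. The boundary of $T$ is a free p.m.p.\ profinite $F_2$-space, and the identification $(\Gamma\times\partial_\R T)/\G_X\cong\partial_\R T$ finishes the proof. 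That action is not cocompact, and the paper explicitly leaves the cocompact case open as Question~\ref{que:geometric action}. By insisting on a cocompact lattice, you have taken on a strictly harder problem.

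If you want to keep a cocompact example, you need a lattice where freeness can actually be checked. One possibility is a linear (quaternionic, Mozes-type) lattice in $PGL_2(\mathbb{Q}_p)\times PGL_2(\mathbb{Q}_q)$: there $\mathrm{pr}_1$ is injective and every nontrivial element fixes at most two points of $\mathbb{P}^1(\mathbb{Q}_p)\cong\partial T_1$. For the groups you actually name, Step 3 remains unproved.
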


\begin{proof}
Let $F_2$ be the free group of rank 2. Since the special linear group $SL_2(\ZZ)$ over $\ZZ$ decomposes as amalgamated free product $SL_2(\ZZ) = \ZZ/4\ZZ *_{\ZZ/2\ZZ}\ZZ/6\ZZ$, the group $SL_2(\ZZ)$ contains $F_2$ as a subgroup. 

Fix an embedding $\iota \colon F_2 \inj SL_2(\ZZ)$. For $n \in \NN$, let $q_n \colon SL_2(\ZZ) \to SL_2(\ZZ/2^n \ZZ)$ be the quotient homomorphism. Define $K_n \unlhd F_2$ by $K_n = \iota^{-1}(Ker(q_n))$. Set $K_0=F_2$ for convenience. The normal subgroups $\{K_n\}_{n = 0}^\infty$ are decreasing and satisfy $[K_{n-1}: K_n] \le 2^4$ for any $n \in \NN$. 

Construct a tree $T$ as follows. The vertex set $T^{(0)}$ of $T$ is $\bigsqcup_{n = 0}^\infty F_2/K_n$. For each $n \in \NN$, connect two vertices $aK_{n-1}$ and $bK_n$ with $x,y \in F_2$, if $xK_{n-1} \supset yK_n$. By $\forall n \in \NN,\,[K_{n-1}: K_n] \le 2^4$, $T$ is a uniformly locally finite tree. The action of $F_2$ on $T$ is defined by $g\cdot xK_n = gxK_n$ for $g,x \in F_2$ and $n \ge 0$. The relation $E_{F_2}^{\partial_\R T}$ is not measure-finite since $F_2 \act \partial_\R T$ is a free probability measure preserving action by \cite[Fact 1.4]{Kec05}.

Let $F_2=\la a,b \ra$ and let $\Gamma$ be the Cayley graph of $F_2$ with respect to $\{a,b\}^{\pm 1}$. Define the action $F_2 \act \Gamma \times T$ by $g\cdot (x,y) = (gx,gy)$. The action $F_2 \act \Gamma \times T$ is free since $F_2 \act \Gamma$ is free.

Set $X=\Gamma \times T$. Note $\partial_\R X = (\partial_\R \Gamma \times \partial_\R T) \sqcup (\partial_\R \Gamma \times T) \sqcup (\Gamma \times \partial_\R T)$. We have a $F_2$-equivariant Borel isomorphism $(\Gamma \times \partial_\R T)/\G_X \to \partial_\R T$. Hence, $E_{F_2}^{(\Gamma \times \partial_\R T)/\G_X}$ is not measure-hyperfinite. By this and $(\Gamma \times \partial_\R T)/\G_X \subset \partial_\R X / \G_X$, $E_{F_2}^{\partial_\R X / \G_X}$ is not measure-hyperfinite.
\end{proof}

While the action in Proposition \ref{prop:Roller mod G is not hyperfin} is proper, it is not cocompact. This raises the following question.

\begin{que}\label{que:geometric action}
    Suppose that a countable group $G$ acts on product of trees $X$ properly and cocompactly. Is $E_G^{\partial_\R X / \G_X}$ hyperfinite?
\end{que}

\begin{rem}
In Proposition \ref{prop:Roller mod G is not hyperfin}, we can take a diagonal action $G \act T_1\times T_2$ so that the action $G \act T_i$ is transitive for every $i \in \{1,2\}$ as follows. Take the action $F_2 \act T$ as in the proof of Proposition \ref{prop:Roller mod G is not hyperfin} such that $E_{F_2}^{\partial_\R T}$ is not measure-hyperfinite. Note that the valency of every vertex of $T$ is at most $2^4$. For each $n \in \NN$, let $T_n$ be the rooted tree with the root $o \in T_n^{(0)}$ such that the valency of $o$ is $n$ and the valency of all other vertices is $2^4$. For each $v \in T^{(0)}$ with valency $\ell$ satisfying $\ell < 16$, attach a copy $T_v$ of $T_{16-\ell}$ (i.e. there is an isomorphism $\iota_v \colon T_v \to T_{16-\ell}$) to $T$ by identifying $o$ and $v$. The resulting graph $T'$ is a 16-regular tree. Fix an action $F_8\act T'$, which is possible since $T'$ is isomorphic to the standard Cayley graph of $F_8$. Note that $T$ is a subtree of $T'$. Extend the action $F_2 \act T$ to $F_2 \act T'$ by defining the isomorphism $g \colon T_v \to T_{gv}$ for each $v \in T^{(0)}$ and $g \in F_2$ such that $\iota_{gv} \circ g = \iota_v$. Define the action $F_{10} \act T'$ by extending the homomorphisms $F_2\to Aut(T')$ and $F_8\to Aut(T')$ to $F_{10} \,(\cong F_2*F_8)$ as free product. By $E_{F_2}^{\partial_\R T} \subset E_{F_2}^{\partial_\R T'} \subset E_{F_{10}}^{\partial_\R T'}$, the relation $E_{F_{10}}^{\partial_\R T'}$ is not measure-hyperfinite. Let $F_{10}=\la a_1,\cdots,a_{10} \ra$ and let $\Gamma$ be the Cayley graph of $F_{10}$ with respect to $\{a_1,\cdots,a_{10}\}^{\pm 1}$. Set $G=F_{10}$, $T_1=T'$, and $T_2=\Gamma$, then the action $G \act T_i$ is transitive for each $i \in \{1,2\}$ as well as the product action $G \act T_1\times T_2$ satisfies the conditions in Proposition \ref{prop:Roller mod G is not hyperfin}. Note that this example still dose not answer Question \ref{que:geometric action} negatively.
\end{rem}

\section{Commuting partial maps}\label{sec:Commuting partial maps}

First, we turn the construction in Proposition \ref{prop:Roller mod G is not hyperfin} to that of commuting partial maps. The idea is creating the path-connectivity equivalence relation on a graph by shrinking geodesic paths.

\begin{prop}\label{prop:commuting non hyperfinite}
    There exists a standard Borel space $X$, a Borel subset $B \subset X$, and bounded-to-1 surjective Borel maps $f_1,f_2 \colon B \to X$ such that $f_1 f_2 =f_2f_1$ on $B \cap f_1^{-1}(B) \cap f_2^{-1}(B)$ and $E_t^X(f_1,f_2)$ is not measure-hyperfinite.
\end{prop}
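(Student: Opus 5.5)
We realize the orbit relation of a non-amenable group action as the equivalence relation generated by two commuting "edge-deleting" partial maps on a space of marked geodesic paths. Following the hint that one obtains path-connectivity by shrinking geodesics, we work in the tree $\Gamma$ of the proof of Proposition \ref{prop:Roller mod G is not hyperfin} and delete edges from one end or the other.

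Let $F_2=\la a,b\ra$ and let $F_2 \act Y$ be a free Borel action preserving a Borel probability measure $\mu$ with $E_{F_2}^Y$ not measure-hyperfinite. For instance, take $Y=\partial_\R T$ from the proof of Proposition \ref{prop:Roller mod G is not hyperfin}, or a Bernoulli shift. Let $W$ be the countable set of reduced words over $\{a,b\}^{\pm1}$; these are exactly the geodesic paths in $\Gamma$ starting at $1$. Set
\begin{align*}
X=W\times Y \quad\text{and}\quad B=\{(w,\xi)\in X \mid |w|\ge 1\}.
\end{align*}
One can view $X$ as the space of pairs (geodesic path in $\Gamma$, point of $Y$) modulo the free diagonal $F_2$-action, normalized so that the path starts at $1$.

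For $(w,\xi)\in B$ with $w=s w'$ and $w=w''t$, where $s,t$ are letters, define
\begin{align*}
f_1(w,\xi)=(w'',\xi), \qquad f_2(w,\xi)=(s^{-1}w,\, s^{-1}\xi)=(w',\,s^{-1}\xi).
\end{align*}
So $f_1$ deletes the last edge of the path, and $f_2$ deletes the first edge and then re-translates so that the path starts at $1$ again. Both maps are Borel, since $W$ is countable and the action is Borel.

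The next step checks the combinatorial properties.

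\textbf{Commutation.} The set $B\cap f_1^{-1}(B)\cap f_2^{-1}(B)$ consists of the pairs $(w,\xi)$ with $|w|\ge 2$. For such $w=s u t$, both $f_1f_2(w,\xi)$ and $f_2f_1(w,\xi)$ equal $(u,s^{-1}\xi)$.

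\textbf{Surjectivity.} Given $(v,\xi)\in X$, choose a letter $t$ with $vt$ reduced; then $f_1(vt,\xi)=(v,\xi)$. Choose a letter $s$ with $sv$ reduced; then $f_2(sv,s\xi)=(v,\xi)$.

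\textbf{Bounded-to-1.} Each fibre of $f_1$ or $f_2$ is indexed by the choice of one letter, so it has at most $4$ elements.

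It remains to identify the generated relation $E=E_t^X(f_1,f_2)$. Let $S=\{1\}\times Y$.
\begin{itemize}
\item Every $(w,\xi)$ is $E$-related to $(1,\xi)$ by applying $f_1$ repeatedly, so $S$ is a complete Borel section.
\item The relation $E|_S$ contains $E_{F_2}^Y$, transported to $S$ via $\xi\mapsto(1,\xi)$. Indeed, for a letter $s$ we have $f_2(s,\xi)=(1,s^{-1}\xi)$ and $f_1(s,\xi)=(1,\xi)$, so $(1,\xi)\,E\,(1,s^{-1}\xi)$.
\item Conversely, $E|_S\subseteq E_{F_2}^Y$. The map $X\to Y/F_2$, $(w,\xi)\mapsto[\xi]$, is constant along $f_1$ and $f_2$, hence $E$-invariant, which gives this inclusion.
\end{itemize}
Hence $E|_S\cong E_{F_2}^Y$.

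Finally, suppose $E$ were $\nu$-hyperfinite for every Borel probability measure $\nu$. Take $\nu=\mu$ pushed forward to $S$. Then $E|_S$ would be $\mu$-hyperfinite, because restricting a hyperfinite relation to a Borel subset keeps it hyperfinite. This contradicts the choice of $Y$, so $E$ is not measure-hyperfinite.

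The main point needing care is the invariance argument $E|_S\subseteq E_{F_2}^Y$. It must be verified for both directions of each generator and on the whole domain $B$, not just where the maps commute. It should be routine, because the map $(w,\xi)\mapsto[\xi]$ is visibly unchanged by $f_1$ and $f_2$.
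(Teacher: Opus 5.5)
Your proof is correct and is essentially the paper's construction. Your $W\times Y$ is an explicit Borel transversal (paths based at $1$) for the paper's quotient $(P\times\partial_\R T)/F_2$ (with $Y=\partial_\R T$), your $f_1,f_2$ are the paper's maps induced by $\widetilde\sigma_2,\widetilde\sigma_1$, and your complete-section argument on $\{1\}\times Y$ spells out the bi-reducibility with $E_{F_2}^{\partial_\R T}$ that the paper only asserts, while letting you bypass the smoothness/quotient step.
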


\begin{proof}
    Let $F_2,\Gamma, T$ as in the proof of Proposition \ref{prop:Roller mod G is not hyperfin}. Define $P$ to be the set of all finite geodesic paths in $\Gamma$. Note that the vertex set $\Gamma^{(0)} \,(=F_2)$ of $\Gamma$ is contained in $P$ as geodesic paths of length $0$. The set $P$ is countable. The group $F_2$ acts on $P$ freely since $F_2 \act \Gamma$ is free. Define the action $F_2 \act P \times \partial_\R T$ by $g \cdot (p,\xi) = (gp,g\xi)$.

    Define $Q \subset P$ by $Q=\{p \in P \mid |p| \ge 1\}$. Define $\sigma_1, \sigma_2 \colon Q \to P$ as follow: for $p=(p_0,\cdots,p_n) \in P$ with $n \in \NN$ and $p_i \in \Gamma^{(0)}$,
    \begin{align}\label{eq:shrink}
        \sigma_1(p) = (p_1,\cdots,p_n)
        {\rm~~~and~~~}
        \sigma_2(p) = (p_0,\cdots,p_{n-1}).
    \end{align}
    Note $Q \cap \sigma_1^{-1}(Q) \cap \sigma_2^{-1}(Q) = \{p \in P \mid |p| \ge 2\}$. For any $p \in P$ with $|p| \ge 2$, we have 
    \begin{align}\label{eq:sigma 1 2 commute}
    \sigma_1\sigma_2 = \sigma_1\sigma_2.    
    \end{align}
    Define maps $\widetilde\sigma_1, \widetilde\sigma_2 \colon Q\times  \partial_\R T \to P \times  \partial_\R T$ by $\widetilde\sigma_i(p,\xi) = (\sigma_i(p),\xi)$ for each $i \in \{1,2\}$. Define $R$ by
    \begin{align*}
        R = E_t^{P \times \partial_\R T}(\widetilde\sigma_1, \widetilde\sigma_2).
    \end{align*}
    For any $v,w \in \Gamma^{(0)}$ and $\xi \in \partial_\R T$, let $p \in P$ be a geodesic in $\Gamma$ from $v$ to $w$, then we have $(v,\xi) \,R\, (w,\xi)$ by $\sigma_1^{|p|}(p)=w$ and $\sigma_2^{|p|}(p)=v$. Hence, the Borel map $\psi \colon P \times \partial_\R T \to \partial_\R T$ defined by $\psi(p,\xi) = \xi$ induces a bijection $(P \times \partial_\R T)/R \to \partial_\R T$. Let $E_{F_2}^{P \times \partial_\R T}\times R$ be the equivalence relation on $P \times \partial_\R T$ generated by $E_{F_2}^{P \times \partial_\R T}$ and $R$. Then, $E_{F_2}^{P \times \partial_\R T}\times R$ and $E_{F_2}^{\partial_\R T}$ are Borel bi-reducible. 
    
    For any $g \in F_2$, we have
    \begin{align}\label{eq:sigma and g commute}
        g\sigma_1=\sigma_1 g
        {\rm ~~~and~~~}
        g\sigma_2 =\sigma_2g.
    \end{align}
    For any $p \in P$ and $\xi,\eta \in \partial_\R T$, we have $(p,\xi) \, E_{F_2}^{P \times \partial_\R T}\, (p,\eta) \iff \xi = \eta$ since $F_2\act P$ is free. Hence, $E_{F_2}^{P \times \partial_\R T}$ is smooth on $\{p\} \times \partial_\R T$ for any $p \in P$. Since $P$ is countable, $E_{F_2}^{P \times \partial_\R T}$ is smooth. Hence, the quotient $(P \times \partial_\R T)/F_2$ is a standard Borel space. Set $X=(P \times \partial_\R T)/F_2$ and $B = (Q \times \partial_\R T)/F_2$. The set $B$ is Borel.

    Define the maps $f_1,f_2 \colon B \to X$ by $f_i([x])=[\widetilde\sigma_i(x)]$ for each $i \in \{1,2\}$, where $x \in Q \times \partial_\R T$ and $[x] \in B$ denotes the $E_{F_2}^{P \times \partial_\R T}$-class of $x$. By \eqref{eq:sigma and g commute}, $f_1, f_2$ are well-defined. Since $\Gamma$ is a 4-regular tree, we have $|f_i^{-1}(x)| \le  4$ for any $x \in X$ and $i \in \{1,2\}$. By \eqref{eq:sigma 1 2 commute}, we have $f_1 f_2 =f_2f_1$ on $B \cap f_1^{-1}(B) \cap f_2^{-1}(B)$.

     We can see that $E_t^X(f_1,f_2)$ and $E_{F_2}^{P \times \partial_\R T}\times R$ are Borel bi-reducible. Hence, $E_t^X(f_1,f_2)$ and $E_{F_2}^{\partial_\R T}$ are Borel bi-reducible. Since $E_{F_2}^{\partial_\R T}$ is not measure-hyperfinite, $E_t^X(f_1,f_2)$ is not measure-hyperfinite.
\end{proof}

\begin{rem}\label{rem:single case}
    Let $X$ be a standard Borel space, $B \subset X$ a Borel subset, and $f \colon B \to X$ a countable-to-1 Borel map (i.e. $f^{-1}(x)$ is countable for any $x \in X$). Then, the equivalence relation $E_t(f)$ generated by $f$ is hyperfinite. This can be shown as follows. Define a map $\widetilde f \colon X\to X$ by $\widetilde f(x) = f(x)$ on $B$ and $\widetilde f(x) = x$ on $X \setminus B$. The map $\widetilde f$ is countable-to-1 and Borel. We have $E_t(f) = E_t({\widetilde f})$. By \cite[Corollary 8.2]{DJK94}, $E_t({\widetilde f})$ is hyperfinite.
\end{rem}

In the same way as Proposition \ref{prop:commuting non hyperfinite}, we get a proof of Theorem \ref{thm:commuting Einfty} below.

\begin{proof}[\textbf{Proof of Theorem \ref{thm:commuting Einfty}}]
    Let $G$ be a finitely generated group acting on a standard Borel space $Y$ such that $E_G^Y$ is a universal countable Borel equivalence relation. Such $G, Y$ exist by \cite[Proposition 1.3]{DJK94}. Let $S$ be a finite generating set of $G$ and $\Gamma$ be the Cayley graph of $G$ with respect to $S$. Define $P$ to be the set of all finite paths in $\Gamma$ and define $Q \subset P$ by $Q=\{p\in P\mid |p|\ge1\}$. Define $\sigma_1,\sigma_2\colon Q\to P$ by $ \sigma_1(p) = (p_1,\cdots,p_n)$ and $\sigma_2(p) = (p_0,\cdots,p_{n-1})$ for $p=(p_0,\cdots,p_n)$ as in \eqref{eq:shrink}. Since $E_G^{P\times Y}$ is smooth, the quotient $X=(P\times Y)/G$ is a standard Borel space. Define $B\subset X$ and $f_1, f_2\colon B \to X$ by $B=(Q\times Y)/G$, and $f_i([(p, y)]) = [(\sigma_i(p), y)]$ for $(p,y) \in Q\times Y$, then $E_t^X(f_1,f_2)$ is a universal countable Borel equivalence relation.
\end{proof}


\begin{thebibliography}{KEOSS24}

\bibitem[Ada94]{Ada94}
S.~Adams, \emph{Boundary amenability for word hyperbolic groups and an application to smooth dynamics of simple groups}, Topology \textbf{33} (1994), no.~4, 765--783. \MR{1293309}

\bibitem[AL26]{AL26}
Michal Amir and Nir Lazarovich, \emph{Simple lattices in products of davis complexes}, 2026.

\bibitem[BCG{\etalchar{+}}09]{BCGNW09}
J.~Brodzki, S.~J. Campbell, E.~Guentner, G.~A. Niblo, and N.~J. Wright, \emph{Property {A} and {$\rm CAT(0)$} cube complexes}, J. Funct. Anal. \textbf{256} (2009), no.~5, 1408--1431. \MR{2490224}

\bibitem[BGH22]{BGC22}
Mladen Bestvina, Vincent Guirardel, and Camille Horbez, \emph{Boundary amenability of {${\rm Out}(F_N)$}}, Ann. Sci. \'Ec. Norm. Sup\'er. (4) \textbf{55} (2022), no.~5, 1379--1431. \MR{4517690}

\bibitem[BK22]{BK22}
Ievgen Bondarenko and Bohdan Kivva, \emph{Automaton groups and complete square complexes}, Groups Geom. Dyn. \textbf{16} (2022), no.~1, 305--332. \MR{4424972}

\bibitem[BM97]{BM97}
Marc Burger and Shahar Mozes, \emph{Finitely presented simple groups and products of trees}, C. R. Acad. Sci. Paris S\'er. I Math. \textbf{324} (1997), no.~7, 747--752. \MR{1446574}

\bibitem[BM00]{BM00}
\bysame, \emph{Lattices in product of trees}, Inst. Hautes \'Etudes Sci. Publ. Math. (2000), no.~92, 151--194. \MR{1839489}

\bibitem[DJK94]{DJK94}
R.~Dougherty, S.~Jackson, and A.~S. Kechris, \emph{The structure of hyperfinite {B}orel equivalence relations}, Trans. Amer. Math. Soc. \textbf{341} (1994), no.~1, 193--225.

\bibitem[GJ15]{GJ15}
Su~Gao and Steve Jackson, \emph{Countable abelian group actions and hyperfinite equivalence relations}, Invent. Math. \textbf{201} (2015), no.~1, 309--383. \MR{3359054}

\bibitem[GN11]{GN11}
Erik Guentner and Graham~A. Niblo, \emph{Complexes and exactness of certain {A}rtin groups}, Algebr. Geom. Topol. \textbf{11} (2011), no.~3, 1471--1495. \MR{2821432}

\bibitem[Ham09]{Ham09}
Ursula Hamenst\"adt, \emph{Geometry of the mapping class groups. {I}. {B}oundary amenability}, Invent. Math. \textbf{175} (2009), no.~3, 545--609. \MR{2471596}

\bibitem[HH21]{HH21}
Camille Horbez and Jingyin Huang, \emph{Boundary amenability and measure equivalence rigidity among two-dimensional artin groups of hyperbolic type}, 2021.

\bibitem[HSS20]{HSS20}
Jingyin Huang, Marcin Sabok, and Forte Shinko, \emph{Hyperfiniteness of boundary actions of cubulated hyperbolic groups}, Ergodic Theory Dynam. Systems \textbf{40} (2020), no.~9, 2453--2466. \MR{4130811}

\bibitem[JKL02]{JKL02}
S.~Jackson, A.~S. Kechris, and A.~Louveau, \emph{Countable {B}orel equivalence relations}, J. Math. Log. \textbf{2} (2002), no.~1, 1--80.

\bibitem[JW09]{JW09}
David Janzen and Daniel~T. Wise, \emph{A smallest irreducible lattice in the product of trees}, Algebr. Geom. Topol. \textbf{9} (2009), no.~4, 2191--2201. \MR{2558308}

\bibitem[Kai04]{Kai04}
Vadim~A. Kaimanovich, \emph{Boundary amenability of hyperbolic spaces}, Discrete geometric analysis, Contemp. Math., vol. 347, Amer. Math. Soc., Providence, RI, 2004, pp.~83--111. \MR{2077032}

\bibitem[Kar22]{Kar22}
Chris Karpinski, \emph{Hyperfiniteness of boundary actions of relatively hyperbolic groups}, 2022, To appear in Groups Geom. Dyn.

\bibitem[Kec05]{Kec05}
A.~S. Kechris, \emph{Unitary representations and modular actions}, Zap. Nauchn. Sem. S.-Peterburg. Otdel. Mat. Inst. Steklov. (POMI) \textbf{326} (2005), 97--144, 281--282. \MR{2183218}

\bibitem[Kec25]{Kec25}
Alexander~S. Kechris, \emph{The theory of countable {B}orel equivalence relations}, Cambridge Tracts in Mathematics, vol. 234, Cambridge University Press, Cambridge, 2025. \MR{4837611}

\bibitem[KEOSS24]{KEOSS24}
Srivatsav Kunnawalkam~Elayavalli, Koichi Oyakawa, Forte Shinko, and Pieter Spaas, \emph{Hyperfiniteness for group actions on trees}, Proc. Amer. Math. Soc. \textbf{152} (2024), no.~9, 3657--3664. \MR{4781963}

\bibitem[Kid08]{Kid08}
Yoshikata Kida, \emph{The mapping class group from the viewpoint of measure equivalence theory}, Mem. Amer. Math. Soc. \textbf{196} (2008), no.~916, viii+190. \MR{2458794}

\bibitem[KOO26]{KOO26}
Chris Karpinski, Damian Osajda, and Koichi Oyakawa, \emph{Graphical small cancellation and hyperfiniteness of boundary actions}, J. Lond. Math. Soc. (2) \textbf{113} (2026), no.~4, Paper No. e70516, 23. \MR{5055885}

\bibitem[L\'10]{Lec10}
Jean L\'ecureux, \emph{Amenability of actions on the boundary of a building}, Int. Math. Res. Not. IMRN (2010), no.~17, 3265--3302. \MR{2680274}

\bibitem[LLM23]{LLM23}
Nir Lazarovich, Ivan Levcovitz, and Alex Margolis, \emph{Counting lattices in products of trees}, Comment. Math. Helv. \textbf{98} (2023), no.~3, 597--630. \MR{4668544}

\bibitem[Mar19]{Mar19}
Timoth\'ee Marquis, \emph{On geodesic ray bundles in buildings}, Geom. Dedicata \textbf{202} (2019), 27--43. \MR{4001806}

\bibitem[MS20]{MS20}
Timoth\'{e}e Marquis and Marcin Sabok, \emph{Hyperfiniteness of boundary actions of hyperbolic groups}, Math. Ann. \textbf{377} (2020), no.~3-4, 1129--1153. \MR{4126891}

\bibitem[NS13]{NS13}
Amos Nevo and Michah Sageev, \emph{The {P}oisson boundary of {${\rm CAT}(0)$} cube complex groups}, Groups Geom. Dyn. \textbf{7} (2013), no.~3, 653--695. \MR{3095714}

\bibitem[NV25]{NV25}
Petr Naryshkin and Andrea Vaccaro, \emph{Hyperfiniteness and {B}orel asymptotic dimension of boundary actions of hyperbolic groups}, Math. Ann. \textbf{392} (2025), no.~1, 197--208. \MR{4887757}

\bibitem[Oya24]{Oya24}
Koichi Oyakawa, \emph{Hyperfiniteness of boundary actions of acylindrically hyperbolic groups}, Forum Math. Sigma \textbf{12} (2024), Paper No. e32, 31. \MR{4715159}

\bibitem[Oya25]{Oya25}
Koichi Oyakawa, \emph{Borel asymptotic dimension of the roller boundary of finite dimensional cat(0) cube complexes}, 2025, Preprint, arXiv:2505.10334.

\bibitem[Oya26]{Oya26}
\bysame, \emph{Hyperfiniteness of the boundary action of virtually special groups}, 2026.

\bibitem[Oza06]{Oza06}
Narutaka Ozawa, \emph{Boundary amenability of relatively hyperbolic groups}, Topology Appl. \textbf{153} (2006), no.~14, 2624--2630. \MR{2243738}

\bibitem[PS21]{PS21}
Piotr Przytycki and Marcin Sabok, \emph{Unicorn paths and hyperfiniteness for the mapping class group}, Forum Math. Sigma \textbf{9} (2021), Paper No. e36, 10. \MR{4252215}

\bibitem[Rat07]{Rat07}
Diego Rattaggi, \emph{A finitely presented torsion-free simple group}, J. Group Theory \textbf{10} (2007), no.~3, 363--371. \MR{2320973}

\bibitem[SWY26]{SWY26}
Forte Shinko, Felix Weilacher, and Jing Yu, \emph{Hyperfiniteness of bounded-to-one actions of commutative monoids}, 2026.

\bibitem[Wis07]{Wis07}
Daniel~T. Wise, \emph{Complete square complexes}, Comment. Math. Helv. \textbf{82} (2007), no.~4, 683--724. \MR{2341837}

\end{thebibliography}
\newcommand{\etalchar}[1]{$^{#1}$}
\providecommand{\bysame}{\leavevmode\hbox to3em{\hrulefill}\thinspace}
\providecommand{\MR}{\relax\ifhmode\unskip\space\fi MR }
\providecommand{\MRhref}[2]{%
  \href{http://www.ams.org/mathscinet-getitem?mr=#1}{#2}
}
\providecommand{\href}[2]{#2}

\vspace{5mm}

\noindent 970 Evans Hall, Department of Mathematics, University of California, Berkeley, CA 94720, USA.

\noindent E-mail: \emph{koichi.oyakawa@berkeley.edu}

\end{document}